\theoremstyle{plain}
\newtheorem{thm}{Theorem}
\newtheorem{lem}[thm]{Lemma}
\newtheorem{prop}[thm]{Proposition}
\theoremstyle{definition}
\theoremstyle{remark}
\newtheorem{rem}[thm]{Remark}
\theoremstyle{plain}
\newcommand\supp{\operatorname{supp}}
\newcommand\im{\operatorname{image}}
\def\ZZ{\mathbb{Z}}
\def\RR{\mathbb{R}}
\def\CC{\mathbb{C}}
\def\dd{\mathrm{d}}
\def\Ham{Ham}
\begin{document}

\pagestyle{headings}

\bibliographystyle{alphanum}

\title{Hamiltonian commutators with large Hofer norm}

\thanks{The author was supported by NSF grant DMS-1105813 and also a Simons instructorship.}

\date{\today} 
\author{Michael Khanevsky}
\address{Michael Khanevsky, Dept. of Mathematics, 
University of Chicago, 5734 S. University Avenue, Chicago, IL 60637, USA}
\email{khanev@math.uchicago.edu}

\begin{abstract}
We show that commutators of Hamiltonian diffeomorphisms may have arbitrarily large Hofer norm. 
The proposed technique is applicable to positive genus surfaces and their products. This gives partial answer to 
the question presented by McDuff and Polterovich in ~\cite{McD:Prblm}.
\end{abstract}

\maketitle

%%%%%%%%%%%%%%%%%%%%%%%%%%%%%%%%%%%%%%%%%%%%%%%%%%%%%%%%%%%%%%%%%%%%%%%%%%
\section{Introduction and results}
%%%%%%%%%%%%%%%%%%%%%%%%%%%%%%%%%%%%%%%%%%%%%%%%%%%%%%%%%%%%%%%%%%%%%%%%%%

In this paper we consider the following question which was presented by D. McDuff and L. Polterovich (question 4 in ~\cite{McD:Prblm}).
Given a closed symplectic manifold $(M, \omega)$, is there an upper bound for Hofer's norm of a commutator
of two Hamiltonian diffeomorphisms of $M$? Clearly, a similar question for $\RR^{2n}$ and the standard symplectic 
form has negative answer: it is sufficient to pick two Hamiltonians $f, g$ with nontrivial commutator $[f, g]$.
Then the norm of the rescaled commutator $\left\|[s f(\frac{\cdot}{s}), s g(\frac{\cdot}{s})] \right\| \to \infty$ as $s$ goes to infinity. However, this rescaling trick cannot be applied 
to closed manifolds and the question becomes not obvious. In this article we construct commutators with arbitrarily large norm in a class of 
closed manifolds which contains surfaces of positive genus and direct products of such surfaces:

\begin{thm} \label{T:main}
  Let $(\Sigma, \omega)$ be a closed surface of positive genus and $(N, \omega')$ be a torus or a closed symplectic manifold which admits a pair of transverse
  closed Lagrangians whose union is weakly exact.
  Then the product manifold $(\Sigma \times N, \omega \oplus \omega')$ admits Hamiltonian commutators with arbitrarily large Hofer norm.
\end{thm}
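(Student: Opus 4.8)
The plan is to produce, for every $C>0$, Hamiltonian diffeomorphisms $\varphi,\psi$ of $\Sigma\times N$ whose commutator has Hofer norm greater than $C$. Fix a non-separating simple closed curve $d\subset\Sigma$ together with a thin annular neighbourhood $A\cong(\RR/\ZZ)\times(-\delta,\delta)$, with coordinates $(\theta,r)$, $\omega|_A=d\theta\wedge dr$ and $\mathrm{area}(A)<\tfrac12\,\mathrm{area}(\Sigma)$. Let $\sigma_k\in\mathrm{Ham}(\Sigma)$ be supported in $A$ and act there as a shear winding the core $k$ times; one can arrange that the generating Hamiltonian $G_k$ is supported in $A$ and has oscillation $\max G_k-\min G_k\asymp k\delta$. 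Put $\varphi:=\sigma_k\times\mathrm{id}_N$. Next choose a non-separating curve $c\subset\Sigma$ meeting $d$ transversally and — using that the thin annulus $A$ can be Hamiltonianly displaced off the measure-zero set $c$ — pick $\psi_\Sigma\in\mathrm{Ham}(\Sigma)$ with $\psi_\Sigma(\bar A)\cap c=\varnothing$; put $\psi:=\psi_\Sigma\times\mathrm{id}_N$ and $\Phi_k:=[\varphi,\psi]$. I then want to show $\|\Phi_k\|\to\infty$.

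The key point is that, although $\Phi_k$ is a commutator, it still ``contains'' the full shear $\sigma_k$ near a suitable Lagrangian. Let $L_1\subset N$ be a weakly exact closed Lagrangian, which exists by hypothesis (if $N$ is a torus, take $L_1$ a non-contractible circle), and set $\mathcal L:=c\times L_1\subset\Sigma\times N$. Writing $\Phi_k=\varphi\cdot(\psi\varphi^{-1}\psi^{-1})$, the second factor is supported in $\psi(\mathrm{supp}\,\varphi)=\psi_\Sigma(\bar A)\times N$, which is disjoint from $\mathcal L$; hence $\psi\varphi^{-1}\psi^{-1}$ is the identity on a neighbourhood of $\mathcal L$ and
\[
\Phi_k(\mathcal L)=\varphi(\mathcal L)=\sigma_k(c)\times L_1 .
\]
This is precisely why one passes to the product $\Sigma\times N$: it supplies both a weakly exact Lagrangian and symplectic asphericity. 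Indeed the hypothesis ``$L_1\cup L_2$ is weakly exact'' forces $\omega'$ to vanish on $\pi_2(N)$, so $(\Sigma\times N,\omega\oplus\omega')$ is symplectically aspherical; combined with $\pi_2(\Sigma)=0=\pi_2(\Sigma,c)$ for an essential curve in a positive-genus surface and with weak exactness of $L_1$, one gets that $\mathcal L$ is a weakly exact Lagrangian in an aspherical manifold, so its Lagrangian Floer theory carries a genuine $\RR$-valued action filtration.

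The lower bound on $\|\Phi_k\|$ will come from the Floer-theoretic boundary depth $\beta$. On the one hand $\beta$ is Lipschitz for the Hofer metric and vanishes on $(\mathcal L,\mathcal L)$, so $\beta\bigl(\mathcal L,\Phi_k(\mathcal L)\bigr)\le\|\Phi_k\|$. On the other hand $\beta\bigl(\mathcal L,\Phi_k(\mathcal L)\bigr)=\beta\bigl(c\times L_1,\ \sigma_k(c)\times L_1\bigr)$, and a Künneth decomposition identifies the filtered Floer complex of this pair with $CF\bigl(c,\sigma_k(c)\bigr)\otimes CF(L_1,L_1)$, the filtration being additive. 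Since $L_1$ is weakly exact, $HF(L_1,L_1)\cong H_*(L_1)\ne 0$, so tensoring with $CF(L_1,L_1)$ cannot shorten the longest bar; hence $\beta\bigl(\mathcal L,\Phi_k(\mathcal L)\bigr)\ge\beta\bigl(c,\sigma_k(c)\bigr)$. Finally $CF\bigl(c,\sigma_k(c)\bigr)$ is generated by the $O(k)$ intersection points of $c$ with its $k$-fold shear, whose actions fill an interval of length $\asymp\max G_k-\min G_k\asymp k\delta$; since $\dim HF\bigl(c,\sigma_k(c)\bigr)=\dim H_*(c)=2$ stays bounded while the number of generators grows, the barcode must contain a bar of length $\asymp k\delta$. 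Combining these, $\|\Phi_k\|\gtrsim k\delta\to\infty$, which proves the theorem.

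The only genuinely non-routine step is the last one: showing that the barcode of $CF\bigl(c,\sigma_k(c)\bigr)$ contains arbitrarily long bars. This needs a perturbation to transversality (the curves $c$ and $\sigma_k(c)$ coincide outside $A$), a careful description of the action spectrum of the spiralling configuration so as to locate the two surviving homology classes near the bottom of the spectrum — thereby forcing the high-action generators to cancel in long-lived pairs — and the customary bookkeeping with the group-ring/Novikov coefficients coming from $\pi_1(\Sigma\times N)$. Everything else (the product construction, the support computation for $[\varphi,\psi]$, weak exactness of $c\times L_1$, and the Künneth and Lipschitz properties of $\beta$) is standard, so the main obstacle is this quantitative control of the Floer complex of a highly sheared curve.
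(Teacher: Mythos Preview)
Your argument hinges on finding $\psi_\Sigma\in\mathrm{Ham}(\Sigma)$ with $\psi_\Sigma(\bar A)\cap c=\varnothing$, and this is where it breaks. You want $c$ to cross the core $d$ so that the shear $\sigma_k$ actually spirals $c$; the phrase ``the $O(k)$ intersection points of $c$ with its $k$-fold shear'' and the remark that ``$c$ and $\sigma_k(c)$ coincide outside $A$'' both point to $c\cdot d=\pm1$. But then no isotopy at all --- Hamiltonian or not --- can push $\bar A$ off $c$: the core $d\subset\bar A$ has $[d]\cdot[c]\neq0$, and this homological pairing is preserved by any map isotopic to the identity, so $\psi_\Sigma(d)\cap c\neq\varnothing$ for every $\psi_\Sigma$. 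The parenthetical ``$c$ has measure zero'' is a red herring; the obstruction is topological. If instead you take $c\cdot d=0$ to allow the displacement, then $c$ can be isotoped off $A$ altogether, $\sigma_k$ no longer produces the clean $k$-fold spiral, and the action/boundary-depth picture you sketch in the last paragraph is no longer the relevant one. Either way the support computation $\Phi_k(\mathcal L)=\sigma_k(c)\times L_1$, which is the heart of your reduction, does not go through as written.

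There is a second, independent gap you partly acknowledge: the inference ``many generators with actions spread over length $\asymp k\delta$, bounded homology, hence a long bar'' is false in general --- generators could cancel in nearest-neighbour pairs giving only short bars. You flag this as the ``non-routine step'' but give no mechanism for it; in fact it requires exactly the kind of explicit strip/lune analysis that the paper carries out. The paper's route is rather different from yours: it takes two \emph{overlapping} autonomous shears $f_t,g_s$ (one horizontal, one vertical) on $T^2$, picks two disjoint parallel meridians $L,L'$, rewrites $E_{sep}(L,[f_t,g_s]L')=E_{sep}(g_sL,f_tg_sL')$ using $f_tL=L$, and then bounds this from below by Usher's theorem via a complete combinatorial enumeration of lunes on the universal cover, showing every lune through a well-chosen point has area at least $\min(t,s)-1$. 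The product with $N$ is handled by the ``admissible manifold'' formalism (both Lagrangians $K_1,K_2$ are used), not by a K\"unneth/boundary-depth argument with a single $L_1$.
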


\medskip

Main tool used in the argument is the theorem below which appeared in ~\cite{Usher:subm-Hnorm}. 
Given two Lagrangians $L, L'$ we define the \emph{separation energy}
\[
  E_{sep} (L, L') = \inf_{g L \cap L' = \emptyset} \| g \|
\]
to be the least Hofer energy needed to move $L$ away from $L'$ by a compactly supported Hamiltonian.
In the case no such deformation exists we set $E_{sep} (L, L') = \infty$. This notion is a natural generalization 
of the displacement energy of a Lagrangian. 

\begin{thm} \label{T:bound}\emph{(M. Usher)}
  Let $(M, \omega)$ be a geometrically bounded symplectic manifold, $L, L' \subset M$ be two compact Lagrangians that 
  intersect transversely. Pick an intersection point $p \in L \cap L'$ and 
  a tame almost complex structure $J$. Then the separation energy satisfies
  \[
   E_{sep} (L, L') \geq \min (\sigma_M, \sigma_L, \sigma_{L'}, \sigma_p)
  \]
  where $\sigma_M$ denotes the minimal energy of a nonconstant $J$-holomorphic sphere in $M$, $\sigma_L, \sigma_{L'}$ stand for the minimal
  energy of nonconstant $J$-holomorphic discs with boundary on $L, L'$, respectively. $\sigma_p$ denotes the minimal energy
  of a nonconstant $J$-holomorphic strip
  \[
	u: (\RR \times [0, 1], i) \to (M, J),
  \]
   with boundary conditions $u (s, 0) \in L$, $u (s, 1) \in L'$ for all $s \in \RR$ and which converges to $p$ either as $s \to +\infty$ or as $s \to -\infty$. 
   Moreover, in the case $J$ is regular we may consider for $\sigma_p$ only those strips that are isolated in the corresponding nonparametrized moduli space.
\end{thm}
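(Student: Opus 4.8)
The plan is to set up filtered Lagrangian Floer homology for the pair $(L,L')$, to note that displacing $L$ off $L'$ forces that complex to be acyclic with small boundary depth, and then to contradict the definition of $\sigma_p$. So let $g=\phi_H^1$ be compactly supported with $gL\cap L'=\emptyset$, and suppose toward a contradiction that $\|H\|<c$, where $c:=\min(\sigma_M,\sigma_L,\sigma_{L'},\sigma_p)$. First I would build the $\ZZ/2$-Floer complex $CF(L,L')$ generated by the finitely many transverse intersection points, filtered by the action functional $\mathcal{A}$ and with differential $\partial$ counting rigid $J$-holomorphic strips; for a chain $\xi$, write $\mathcal{A}(\xi)$ for the largest action of a generator in its support. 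Geometric boundedness of $M$ yields Gromov compactness, and since $c\le\min(\sigma_M,\sigma_L,\sigma_{L'})$ there is no bubbling of spheres, nor of discs on $L$ or $L'$, of energy $<c$; hence on the range of actions that will intervene one has $\partial^2=0$ and $(CF(L,L'),\partial,\mathcal{A})$ is a genuine filtered complex. (If $\sigma_M,\sigma_L,\sigma_{L'}$ are finite one replaces $CF(L,L')$ by its low-energy truncation; these three bounds are precisely what make that truncation, and its Hamiltonian invariance below level $c$, well defined, and this bookkeeping is the main technical burden of the proof. If $J$ is not regular one must also perturb it, which is why the unrefined bound permits arbitrary rather than isolated strips.)

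Next I would exploit the displacement. Since $gL\cap L'=\emptyset$, the complex $CF(gL,L')$ has no generators, hence is the zero complex, of zero boundary depth. As $gL$ is Hamiltonian isotopic to $L$ along $H$, the filtered continuation maps between $CF(L,L')$ and $CF(gL,L')$, together with the homotopies identifying their composites with the identity, show both that $CF(L,L')$ is acyclic and that its boundary depth $\beta$ satisfies $\beta\le\|H\|$ --- this is the $1$-Lipschitz dependence of boundary depth on the Hofer norm, applied to a target of vanishing boundary depth. Over a field, an acyclic filtered complex with boundary depth $\beta$ carries a contracting homotopy $K$, that is $\partial K+K\partial=\mathrm{id}$, with $\mathcal{A}(K\xi)\le\mathcal{A}(\xi)+\beta$ for every nonzero chain $\xi$. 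Fix such a $K$; it raises action by at most $\|H\|<c$.

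The conclusion now comes from applying the homotopy identity to $p$: $p=\partial(Kp)+K(\partial p)$. Every generator $q$ occurring in $\partial p$ is joined to $p$ by a $J$-holomorphic strip asymptotic to $p$, hence of energy $\ge\sigma_p$, so $\mathcal{A}(q)\le\mathcal{A}(p)-\sigma_p\le\mathcal{A}(p)-c$; therefore $K(\partial p)$ is supported in actions $\le\mathcal{A}(p)-c+\|H\|<\mathcal{A}(p)$, and the coefficient of the generator $p$ in $K(\partial p)$ vanishes. Likewise, any generator $y$ with $\langle\partial y,p\rangle\ne0$ is joined to $p$ by a strip asymptotic to $p$, so $\mathcal{A}(y)\ge\mathcal{A}(p)+\sigma_p\ge\mathcal{A}(p)+c>\mathcal{A}(p)+\|H\|\ge\mathcal{A}(Kp)$, so $y$ cannot appear in $Kp$, and the coefficient of $p$ in $\partial(Kp)=\sum_y\langle Kp,y\rangle\,\partial y$ vanishes as well. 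Thus the $p$-coefficient of the right-hand side is $0$ while that of the left-hand side is $1$ --- a contradiction. Hence every $H$ with $\phi_H^1=g$ has $\|H\|\ge c$, so $\|g\|\ge c$, and taking the infimum over displacing $g$ gives $E_{sep}(L,L')\ge\min(\sigma_M,\sigma_L,\sigma_{L'},\sigma_p)$ (vacuously if no displacing $g$ exists). Only rigid strips into or out of $p$ --- those contributing to $\partial$ --- were ever used, which for regular $J$ is precisely the ``moreover'' refinement. I expect the real difficulty to be foundational: making the filtered (possibly truncated) Floer complex and its quantitative Hamiltonian invariance rigorous in the geometrically bounded, not necessarily monotone, setting. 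The bounds $\sigma_M,\sigma_L,\sigma_{L'}$ enter exactly there, to suppress the bubbling that would otherwise obstruct the construction.
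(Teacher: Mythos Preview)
Your sketch is correct and is precisely the boundary-depth argument of Usher that the paper invokes: the paper does not reprove the theorem but simply notes that Usher's proof of Theorem~4.9 in \cite{Usher:subm-Hnorm} works verbatim to give the quantitative bound, with the ``moreover'' clause obtained by restricting to index-zero strips. Your outline---filtered $CF(L,L')$, acyclicity and boundary-depth bound from displacement, and the contradiction at the generator $p$ via $p=\partial(Kp)+K(\partial p)$---is exactly that argument, and your remark that only rigid strips through $p$ enter the contradiction matches the paper's index-zero observation.
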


Theorem 4.9 in ~\cite{Usher:subm-Hnorm} claims $E_{sep} (L, L') > 0$ which is weaker than the statement above. 
Nevertheless, the argument taken verbatim implies Theorem~\ref{T:bound} except for the last sentence regarding regular 
almost complex structures and isolated holomorphic strips. This additional property can be established by restricting attention in the proof
to holomorphic strips of index zero.

This theorem can be seen as an adaptation of Chekanov's bound ~\cite{Chekanov:Lag-energy-1} for the displacement energy: 
$E_{disp} (L) \geq \min (\sigma_M, \sigma_L)$. As in Chekanov's argument, here is no requirement of existence of 
Floer homology.

Theorem~\ref{T:bound} gives a tool to compute lower bounds for Hofer's norm. Given a Hamiltonian $g$, one may consider
two disjoint Lagrangians $L, L'$ such that $g L$ intersects $L'$ transversely. Then
$\| g \| \geq E_{sep} (g L, L') \geq \min (\sigma_M, \sigma_{g L}, \sigma_{L'}, \sigma_p)$ for all $p \in gL \cap L'$.
In certain cases (like the case when $M$ is a surface) the righthandside expression can be easily computed. 

\medskip

The rest of this paper is organized as follows. Section~\ref{S:def} provides basic definitions for Hofer's geometry and Floer theory 
and in Section~\ref{S:con} we construct commutators for Theorem~\ref{T:main}.

\medskip

\emph{Acknowledgements:}
The author is grateful to L. Polterovich and M. Usher for useful discussions and comments. 
He also thanks the referee for his/her careful work.

%%%%%%%%%%%%%%%%%%%%%%%%%%%%%%%%%%%%%%%%%%%%%%%%%%%%%%%%%%%%%%%%%%%%%%%%%%%
\section{Definitions}\label{S:def}
%%%%%%%%%%%%%%%%%%%%%%%%%%%%%%%%%%%%%%%%%%%%%%%%%%%%%%%%%%%%%%%%%%%%%%%%%%%

A symplectic manifold $(M, \omega)$ is called \emph{geometrically bounded} (see ~\cite{ALP}) if there exist a complete Riemannian metric $g$ and 
an almost complex structure $J$ such that $(M, g)$ has bounded sectional curvature and injectivity radius bounded away from zero.
In addition we ask that $\omega (v, J v) > c_1 g(v, v)$ and $|\omega (v, w)|^2 < c_2 g(v,v)g(w,w)$ for some constants $c_1, c_2 > 0$.
$J$ is called a \emph{tame} almost complex structure. Note that for closed $M$ it is enough to assume that $\omega (v, Jv) > 0$.
Let $\Sigma$ be a Riemannian surface (possibly with boundary), $u : \Sigma \to M$ a $J$-holomorphic map. 
The \emph{energy} or \emph{symplectic area} of $u$
is defined by $E(u) = \int_{\Sigma} u^* \omega$.

Given two Lagrangian submanifolds $L_1, L_2 \subset M$ that intersect transversely and a tame almost complex structure $J$ we consider the 
set of nonconstant $J$-holomorphic strips
$u:\RR \times [0, 1] \to M$ that satisfy $u(s, 0) \in L_1$, $u (s, 1) \in L_2$ for all $s \in \RR$. Every strip $u$ with $E(u) < \infty$ converges to a pair of 
intersection points $p, q \in L_1 \cap L_2$ as $s \to \pm \infty$. We denote by $\mathcal{M}^* (p, J)$ the set of all 
nonparametrized finite-energy $J$-holomorphic strips as above which converge to a given point $p \in L_1 \cap L_2$ either as $t \to +\infty$ or as $t \to -\infty$. The general theory
(see ~\cite{Fl:Morse-theory}, ~\cite{McD-Sa:Jhol-2}) states that for \emph{regular} almost complex structures $J$, $\mathcal{M}^* (p, J)$ admits a manifold structure. 
The set of regular structures $J$ is generic in the space of all tame almost complex structures on $M$. 

Assume that $(\Sigma, \omega)$ is a closed symplectic surface. Then every tame complex structure $J$ on $\Sigma$ is regular (see Theorem 12.2 in ~\cite{Si-Ro-Sa:CombiFloer}). Let $L_1, L_2$ be two
simple closed connected curves in $\Sigma$ intersecting transversely. Denote $D_+ = \{ z \in \CC \, \big| \, |z| \leq 1 \, , \, \text{Im} (z) \geq 0\}$.
Following ~\cite{Si-Ro-Sa:CombiFloer} we define a \emph{smooth lune} to be an orientation preserving immersion $u : D_+ \to \Sigma$ such that
$u (D_+ \cap \RR) \subset L_1$, $u (D_+ \cap S^1) \subset L_2$. $u(1), u(-1) \in L_1 \cap L_2$ are the endpoints of $u$.
We pick $p \in L_1 \cap L_2$. By Theorem 12.1, ~\cite{Si-Ro-Sa:CombiFloer} there is a bijection between isolated nonparametrized holomorphic strips in $\mathcal{M}^* (p, J)$
and equivalence classes (under reparametrization) of smooth lunes with endpoint at $p$. Moreover, holomorphic strips and corresponding lunes share the same image in $\Sigma$.
In particular, they have the same energy which is equal to the area covered by the image (counted with multiplicity).
In this situation we may replace the definition of $\sigma_p$ in Theorem~\ref{T:bound} by the minimal area of a smooth lune with endpoint at $p$.

\medskip
 We call a closed symplectic manifold $(N, \omega_N)$ \emph{admissible} if it satisfies the following. 
 \begin{itemize}
  \item
	  There exist two Lagrangian submanifolds
	  $K_1, K_2$ that intersect transversely, an intersection point $p \in K_1 \cap K_2$ and a regular tame
	  almost complex structure $J_N$ such that $\mathcal{M}^* (p, J_N)$ has no isolated strips.
  \item
	  All $J_N$-holomorphic spheres and disks with boundary either in $K_1$ or in $K_2$ are constant. (This condition holds automatically
	  when $\omega_N$ vanishes on $\pi_2 (N, K_1)$ and on $\pi_2 (N, K_2)$.)
 \end{itemize}
 Suppose that a closed symplectic manifold $(N, \omega)$ admits a pair of transverse Lagrangians whose union is weakly exact 
 (namely, $\omega$ vanishes on $\pi_2 (N, K_1 \cup K_2)$). 
 It is admissible by the definition above. A closed surface of positive genus is also an example of such manifold:
 we pick $K_1, K_2$ to be a pair of simple closed curves that intersect transversely at single point $p$. Let $J_N$ be an arbitrary complex structure. 
 Let $u$ be a $J_N$-holomorphic strip with bounded energy and boundary on $K_1 \cup K_2$. Its lift $\tilde{u}$ to the universal cover
 maps boundary of the strip $\RR \times [0, 1]$ to lifts $\widetilde{K}_1 \cup \widetilde{K}_2$. 
 The maximum principle implies that boundary of the image of $\tilde{u}$ is contained in $\widetilde{K}_1 \cup \widetilde{K}_2$, hence 
 $\im(\tilde{u}) \subset \widetilde{K}_1 \cup \widetilde{K}_2$. By the open mapping theorem $\tilde{u}$ is 
 a constant map, hence $u \equiv p$. All holomorphic discs and spheres are constant since $\pi_2 (N, K_1) = \pi_2 (N, K_2) = \pi_2 (N) = \{0\}$.
 
 \begin{lem} \label{lem:prod}
	Admissible manifolds are closed under direct product.
 \end{lem}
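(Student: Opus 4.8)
The plan is to equip the product with the obvious product data and verify, one by one, the three conditions in the definition of admissibility. Let $(N,\omega_N)$ be admissible with witnesses $K_1,K_2,p,J_N$ and let $(N',\omega_{N'})$ be admissible with witnesses $K_1',K_2',p',J_{N'}$. On $N\times N'$ I would take $\tilde K_i = K_i\times K_i'$, which is a Lagrangian for $\omega_N\oplus\omega_{N'}$ and meets $\tilde K_{3-i}$ transversely because each factor does; $\tilde p = (p,p')\in \tilde K_1\cap\tilde K_2$; and $\tilde J = J_N\oplus J_{N'}$, which is tame since $(\omega_N\oplus\omega_{N'})\big((v,v'),\tilde J(v,v')\big) = \omega_N(v,J_Nv)+\omega_{N'}(v',J_{N'}v') > 0$ whenever $(v,v')\neq 0$. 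The fact driving everything is the splitting principle: because $\tilde J$ and the boundary conditions are direct sums, a map $u = (v,v')$ into $N\times N'$ is $\tilde J$-holomorphic precisely when $v$ is $J_N$-holomorphic and $v'$ is $J_{N'}$-holomorphic, its energy is $E(v)+E(v')$, and the linearized operator at $u$ is $D_u = D_v\oplus D_{v'}$.

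Two of the three conditions then follow at once. Since $D_v$ and $D_{v'}$ are surjective ($J_N,J_{N'}$ being regular), so is $D_u$; hence $\tilde J$ is regular. A $\tilde J$-holomorphic sphere is a pair of holomorphic spheres in the factors, both constant by admissibility, so it is constant; a $\tilde J$-holomorphic disk with boundary on $\tilde K_i$ composes with the two projections to give $J_N$- and $J_{N'}$-holomorphic disks with boundary on $K_i$, respectively $K_i'$, both constant, so it too is constant. This is the second bullet of the definition.

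The remaining content is that $\mathcal{M}^* (\tilde p,\tilde J)$ has no isolated strips. Let $u = (v,v')$ be a nonconstant element; after possibly relabeling the ends it converges to $\tilde p$ as $\tau\to+\infty$, so $v\to p$ and $v'\to p'$ there and both have finite energy. If exactly one component is constant, say $v\equiv p$, then $v'$ is a nonconstant element of $\mathcal{M}^* (p',J_{N'})$, which by admissibility of $N'$ has no isolated strips; thus $v'$ lies in a positive-dimensional family $v'_\lambda$ of mutually inequivalent strips, and $(p,v'_\lambda)$ is a positive-dimensional family through $u$ in $\mathcal{M}^* (\tilde p,\tilde J)$ whose members stay mutually inequivalent, since a diagonal reparametrization fixes the constant first component and hence can only identify $(p,v'_\lambda)$ with $(p,v'_{\lambda'})$ when $v'_\lambda$ and $v'_{\lambda'}$ are translates. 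If both $v$ and $v'$ are nonconstant, I would use instead the \emph{independent-retranslation} family $u_r = \big(v(\cdot+r,\cdot),\, v'\big)$ for $r\in\RR$: each $u_r$ again lies in $\mathcal{M}^* (\tilde p,\tilde J)$, and $r\mapsto[u_r]$ is injective in the unparametrized moduli space, for if $[u_r] = [u_{r'}]$ then some diagonal translation $t$ satisfies $v'(\cdot+t,\cdot) = v'$ and $v(\cdot+r+t,\cdot) = v(\cdot+r',\cdot)$, forcing $t = 0$ since a nonconstant finite-energy strip has no nontrivial translation symmetry, and then $r = r'$. In either case $u$ is not isolated.

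Having verified the three conditions, $N\times N'$ is admissible, and induction gives the statement for arbitrary finite products. I expect the one place requiring genuine care is the case of two nonconstant components: one must be sure that the two independent retranslations survive as a nontrivial deformation after passing to the unparametrized moduli space, and this is exactly the absence of translation symmetry for a nonconstant finite-energy strip; the rest is bookkeeping around the splitting of $\tilde J$.
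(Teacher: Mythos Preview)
Your proof is correct and follows the same overall strategy as the paper: take the product Lagrangians and the split almost complex structure, then use the factorwise splitting of holomorphic curves to verify tameness, regularity, constancy of spheres and disks, and the absence of isolated strips. The only difference is in the presentation of the last item: the paper argues that an isolated strip in the product must project to a pair of isolated-or-constant strips in the factors (hence both constant, contradicting nonconstancy), whereas you establish the contrapositive constructively by exhibiting an explicit one-parameter family through any nonconstant product strip---coming from the non-isolated factor when one component is constant, and from independent retranslation of one factor when both components are nonconstant.
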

 \begin{proof}
	Let $N_1, N_2$ be admissible manifolds, $K_{1, N_i}, K_{2, N_i}$ the corresponding Lagrangians.
	Consider the products $K_{j, N_1} \times K_{j, N_2}$, $j=1,2$ of the respective Lagrangians and the product almost complex structure. 
	The regularity of the almost complex structure is achieved by surjectivity of the linearized $\bar{\partial}$ operator (see ~\cite{McD-Sa:Jhol-2}).
	A simple computation shows that surjectivity for the product structure follows from that for $J_{N_1}$ and $J_{N_2}$.
	The product	almost complex structure is tame with respect to the product symplectic form. 
	
	The projections $N_1 \times N_2 \to N_i$, $i = 1, 2$ are $J$-holomorphic,
	therefore every $(J_{N_1}, J_{N_2})$-holomorphic strip in $N_1 \times N_2$ projects to a pair of $J$-holomorphic strips in $N_1$, $N_2$. Conversely,
	given a pair of such strips one may lift them to a strip in $N_1 \times N_2$. 
	This implies that isolated strips in $\mathcal{M}^* \left((p_1, p_2), (J_{N_1}, J_{N_2}); N_1 \times N_2 \right)$ must 
	project to a pair of isolated strips in $\mathcal{M}^* (p_i, J_{N_i}; N_i) \cup \{ \text{const} \}$ which are necessarily constant ones.

	Similarly, $J$-holomorphic spheres and disks project to those in $N_1$ and $N_2$ hence are constant ones.
	The lemma follows.
\end{proof}

By this lemma products of positive genus surfaces are admissible.
In the next section we construct commutators in direct products $M \times N$ where $M$ is a closed 
symplectic surface of positive genus and $N$ is admissible. This proves Theorem~\ref{T:main}.
 
\medskip

Let $(M, \omega)$ be a symplectic manifold, $g$ be a Hamiltonian diffeomorphism with compact support in $M$.
The Hofer norm $\|g\|$ (cf. ~\cite{Hof:TopProp}) is defined by 
\[
  \|g\| = \inf \left\{ \int_0^1 \max G(\cdot, t) - \min G(\cdot, t) \dd t \right\}
\]
where the infimum goes over all compactly supported Hamiltonian functions $G : M \times [0,1] \to \RR$
such that $g$ is the time-1 map of the corresponding flow.

%%%%%%%%%%%%%%%%%%%%%%%%%%%%%%%%%%%%%%%%%%%%%%%%%%%%%%%%%%%%%%%%%%%%%%%%%%%
\section{Construction}\label{S:con}
%%%%%%%%%%%%%%%%%%%%%%%%%%%%%%%%%%%%%%%%%%%%%%%%%%%%%%%%%%%%%%%%%%%%%%%%%%%

First we construct commutators with large Hofer norm on a torus $T^2$.
We use the following convention: $S^1 = \RR / \ZZ$, $T^2 = S^1 \times S^1 = \RR^2 / \ZZ^2$ equipped with $x, y$
coordinates. Sometimes we will consider $x+iy$ as a single complex coordinate. 
$T^2$ is equipped with the standard symplectic form $\omega = \dd x \wedge \dd y$ so it has area $1$.
Let $\eta': S^1 \to \RR$ be the piecewise linear function given by
\[
 \eta'(s) = \left\{ \begin{array}{ll} 
	4s & \textrm{if $s \in [0, \frac{1}{4}]$}\\
	1 & \textrm{if $s \in [\frac{1}{4}, \frac{1}{2}]$}\\
	3-4s & \textrm{if $s \in [\frac{1}{2}, \frac{3}{4}]$}\\
	0 & \textrm{if $s \in [\frac{3}{4}, 1]$} \end{array}\right.
\]
and $\eta : S^1 \to \RR$ be a $C^0$-close smooth approximation of $\eta'$ given by rounding the four corners in 
their $\varepsilon$-small neighborhoods (see Figure~\ref{F:setup}).

\begin{figure}[!htbp]
\begin{center}
\includegraphics[width=0.8\textwidth]{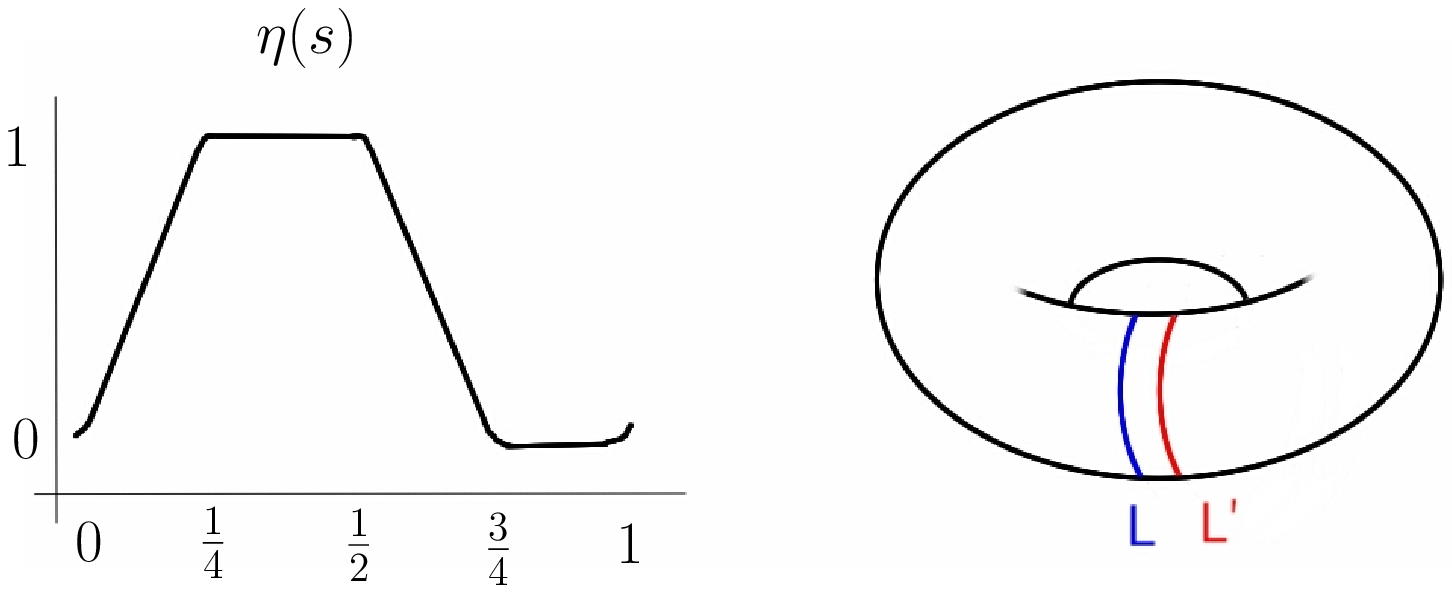}
\caption{}
\label{F:setup}
\end{center}
\end{figure}

We define two Hamiltonian functions $F, G : T^2 \to \RR$ by $F(x, y) = \eta (x)$ and $G (x, y) = \eta (y)$ and denote
by $f_t, g_t$ the time-$t$ maps of the corresponding autonomous flows. Geometrically, $f_t$ rotates upwards all points
in the annulus $-\varepsilon<x<1/4+\varepsilon$ (with the maximal rotation length equal to $4t$), rotates downwards 
the points of the annulus $1/2-\varepsilon<x<3/4+\varepsilon$ and leaves the rest of the torus in place. $g_t$ performs 
the same deformation in the horizontal direction.

\begin{prop}\label{p:torus}
 The Hofer norm of the commutator $[f_t, g_s] = f_{-t} g_{-s} f_t g_s$ satisfies
 \[
	\min(t, s)-1 \leq \left\| [f_t, g_s] \right\| \leq 2 \min (t, s).
 \]

\end{prop}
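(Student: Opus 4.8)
The plan is to establish the two bounds separately. The upper bound $\|[f_t,g_s]\|\le 2\min(t,s)$ uses only formal properties of the Hofer norm. Since $f_t$ is the time-one map of the flow of $t\eta(x)$ and $\eta$ takes values in $[0,1]$, we have $\|f_t\|\le t$, and symmetrically $\|g_s\|\le s$. Writing $[f_t,g_s]=(f_{-t}g_{-s}f_t)\,g_s$ displays the commutator as the product of a conjugate of $g_{-s}$ with $g_s$, so subadditivity and conjugation-invariance give $\|[f_t,g_s]\|\le 2\|g_s\|\le 2s$; writing it instead as $f_{-t}\,(g_{-s}f_tg_s)$ gives $\|[f_t,g_s]\|\le 2\|f_t\|\le 2t$, and the smaller of the two bounds is $2\min(t,s)$.

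For the lower bound the plan is to apply Usher's Theorem~\ref{T:bound}. Since $[g_s,f_t]=[f_t,g_s]^{-1}$ has the same Hofer norm and arises from the same construction with the two coordinates (and $t,s$) interchanged, we may assume $s\le t$ and aim for $\|h\|\ge s-1$, where $h=[f_t,g_s]$. The strategy is to produce two disjoint, non-contractible, embedded circles $\gamma,\gamma'\subset T^2$ with $h\gamma$ meeting $\gamma'$ transversely. Then $h\gamma$ is again a non-contractible embedded circle, so — exactly as in Section~\ref{S:def} for simple closed curves on a positive-genus surface, here using $\pi_2(T^2)=0$ together with the maximum principle and the open mapping theorem on the universal cover — $T^2$ carries no non-constant $J$-holomorphic sphere and neither $h\gamma$ nor $\gamma'$ bounds a non-constant $J$-holomorphic disk; hence $\sigma_{T^2}=\sigma_{h\gamma}=\sigma_{\gamma'}=\infty$. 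As $T^2$ is closed, Theorem~\ref{T:bound} together with the identification of $\sigma_p$ with a minimal lune area (Section~\ref{S:def}) yields
\[
  \|h\|\ \geq\ E_{sep}(h\gamma,\gamma')\ \geq\ \min_{p\in h\gamma\cap\gamma'}\sigma_p\ =\ \min\bigl\{\operatorname{area}(u)\ :\ u\text{ is a smooth lune from }h\gamma\text{ to }\gamma'\bigr\}.
\]
Thus it suffices to choose $\gamma,\gamma'$ so that \emph{every} smooth lune from $h\gamma$ to $\gamma'$ has area at least $s-1$.

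For the curves I would take $\gamma$ to be a horizontal circle $\{y=c_0\}$ and $\gamma'$ another horizontal circle $\{y=c_1\}$, with $c_0\ne c_1$ chosen in a controlled, $t$-dependent manner. Since a horizontal circle is setwise invariant under the horizontal shears $g_{\pm s}$, we have $h\gamma=f_{-t}g_{-s}f_t\gamma$. Now $f_t$ spreads $\gamma$ in the vertical direction: the arc over the ascending part of the profile $\eta$ is translated up by $\approx 4t$, the arc over the flat top is fixed, the arc over the descending part is translated down by $\approx 4t$, and these three pieces are joined by four almost-vertical arcs of width $O(\varepsilon)$ coming from the rounded corners; then $g_{-s}$ drags the pieces of $f_t\gamma$ horizontally by amounts depending on their height (by as much as $\approx 4s$), and $f_{-t}$ closes up the commutator. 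Choosing $c_0$ (and then $c_1$) so that the heights $c_0,c_0\pm 4t$ attained by $f_t\gamma$ sit where the horizontal drag of $g_{-s}$ is extremal while the rest of $h\gamma$ stays near $\{y=c_0\}$, one should be able to show that $h\gamma$ contains an arc which runs between two of its transverse intersections with $\gamma'$ while winding around $T^2$ in the $x$-direction a number of times proportional to $s$ (with an explicit constant), all the while at vertical distance bounded away from $0$ from $\gamma'$; the lune bounded by this winding arc then has area at least (number of winds)$\times$(vertical gap), which the choice of constants makes $\geq s-1$ once the bounded losses near the reconnection with the bulk of $h\gamma$ and at the rounded corners are subtracted. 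One also checks that the (finitely many) other homotopy types of lune attaching to $\gamma'$ cannot produce a smaller value, which gives $\|h\|\geq s-1=\min(t,s)-1$.

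I expect the routine steps to be the upper bound and the vanishing of $\sigma_{T^2},\sigma_{h\gamma},\sigma_{\gamma'}$, and the main obstacle to be the explicit geometric analysis of $h\gamma=f_{-t}g_{-s}f_t\gamma$: one must track the flat pieces and the four rounded-corner arcs of $f_t\gamma$ through $g_{-s}$ and then $f_{-t}$ precisely enough to locate all of $h\gamma\cap\gamma'$, isolate the long winding arc, verify that no short lune is hiding elsewhere, and carry the $O(1)$ error from the $\varepsilon$-rounding — which is exactly what degrades $\min(t,s)$ to $\min(t,s)-1$. A picture in the spirit of Figure~\ref{F:setup} should keep this bookkeeping manageable.
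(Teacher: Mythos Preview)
Your upper bound is correct and is exactly what the paper does.

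For the lower bound you have the right overall strategy (apply Theorem~\ref{T:bound} to two disjoint noncontractible circles), but there is a misreading that makes your task much harder than necessary: Theorem~\ref{T:bound} is stated for a \emph{single chosen} intersection point $p$, so $E_{sep}\ge\sigma_p$ for \emph{each} $p$ individually. You therefore only need to exhibit \emph{one} intersection point all of whose lunes are large; your formulation ``every smooth lune from $h\gamma$ to $\gamma'$ has area at least $s-1$'' asks for $\min_p\sigma_p\ge s-1$, which is far stronger and need not hold for a generic placement of $c_0,c_1$.

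More importantly, the paper avoids computing the commutator image altogether. It takes \emph{vertical} meridians $L=\{1-2\varepsilon\}\times S^1$ and $L'=\{1-\varepsilon\}\times S^1$ lying in the region where $\eta'\equiv 0$, so that $f_tL=L$; bi-invariance of Hofer's norm then gives
\[
E_{sep}\bigl(L,[f_t,g_s]L'\bigr)=E_{sep}\bigl(g_sf_tL,\,f_tg_sL'\bigr)=E_{sep}\bigl(g_sL,\,f_tg_sL'\bigr).
\]
On the universal cover $g_sL$ is a simple two-sided comb with teeth of area $s$, while $f_tg_sL'$ is a comb whose teeth are further oscillated by $f_t$ with oscillation area $\approx t$. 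The paper then fixes one convenient $\tilde p$ and, using the lune-enumeration algorithm of de~Silva--Robbin--Salamon, proves there are exactly six lunes at $\tilde p$ and that each has area $\ge\min(t,s)-1$. Your curve $f_{-t}g_{-s}f_t\gamma$ carries three nontrivial deformations instead of one or two, and the proposal never actually performs the lune analysis: the sentences ``one should be able to show\ldots'' and ``one also checks that the other homotopy types of lune\ldots cannot produce a smaller value'' are precisely where the content of the proof lies. Without the bi-invariance reduction and the freedom to choose a single good $p$, that bookkeeping is substantially harder, and as written it is a gap rather than a proof.
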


Clearly, the proposition implies the desired result for $T^2$ by letting $s, t \to \infty$.

\begin{proof}
 The upper bound follows from a standard computation: 
 \[
	\|f_{-t}\| = \|f_t\| \leq \int_0^t \max (F) - \min(F) \dd t = t.
 \]
 By conjugation invariance of Hofer's norm we have $\|g_{-s} f_t g_s\| = \|f_t\| \leq t$ as well. Therefore 
 $\|[f_t, g_s]\| \leq \|f_{-t}\| + \|g_{-s} f_t g_s\| \leq 2 t$ by the triangle inequality. 
 A similar computation shows $\|[f_t, g_s]\| \leq 2 s$.
 
 \medskip
 
 We prove the lower bound. 
 Note that $\|[f_t, g_s]\|$ depends continuously on $s$ and $t$, hence it is enough to show the 
 statement for a dense subset of values.

 \medskip
 
 \underline{Step I:}
 We introduce two Lagrangians meridians 
 \[
  L = \{1-2\varepsilon\} \times S^1, \quad L' = \{1-\varepsilon\} \times S^1. 
 \]
 ($\varepsilon$ is the same as in construction of $\eta$.)
 $L \cap L' = \emptyset$, hence 
 \[
	\|[f_t, g_s]\|  \geq E_{sep} (L, [f_t, g_s] L')
 \]
 and it is enough to show
 that 
 \[
	E_{sep} (L, [f_t, g_s] L') \geq \min(t, s)-1.
 \]
 Bi-invariance of Hofer's norm implies
 \begin{eqnarray} \label{Eq:esep}
	E_{sep} (L, [f_t, g_s] L') &=& E_{sep} (L, f_{-t} g_{-s} f_t g_s (L')) = E_{sep} (g_s f_t (L), f_t g_s (L')) = \\
	&=& E_{sep} (g_s (L), f_t g_s (L')). \nonumber
 \end{eqnarray}
 The last equality follows from the fact that $f_t$ leaves $L$ invariant.
 
 We use Theorem~\ref{T:bound} to get a lower bound for the righthandside expression in (\ref{Eq:esep}). 
 Equip $T^2$ with multiplication by $i$. As 
 \[
  \pi_2 (T^2) = \pi_2 (T^2, g_s (L)) = \pi_2 (T^2, f_t g_s (L')) = \{0\},
 \]
all holomorphic spheres and holomorphic disks with boundary on Lagrangians have zero energy hence are constant ones. Therefore
 \[
	\sigma_{T^2} = \sigma_{g_s (L)} = \sigma_{f_t g_s (L')} = \infty.
 \]
 
 It is enough to pick an intersection point $p$ so that $\sigma_p \geq \min(t, s)-1$. 
 
 \medskip
 
 \underline{Step II:}
 Put $L_s := g_s (L)$, $L_{ts}' :=  f_t g_s (L')$. 
 Consider the universal cover $\pi : \RR^2 \to T^2$. Let
 \[
	\widetilde{L} = \{1-2\varepsilon\} \times \RR, \quad \widetilde{L}' = \{1-\varepsilon\} \times \RR
 \]
 be lifts of $L$ and $L'$, denote by $\tilde{f}_t$, $\tilde{g}_s$ the lifts of $f_t$, $g_s$ to $\RR^2$. 
 $\widetilde{L}_s := \tilde{g}_s (\widetilde{L})$ depicted on Figure~\ref{F:lift} is a lift of $L_s$. It looks like an infinite two-sided comb whose `teeth' 
 have area $s$ each. $\widetilde{L}'_{ts} := \tilde{f}_t \tilde{g}_s (\widetilde{L}')$ is obtained from a similar `comb' 
 $\tilde{g}_s (\widetilde{L}')$ whose
 `teeth' are vertically deformed by $\tilde{f}_t$ in a periodic way. The area bounded between each oscillation of $\widetilde{L}'_{ts}$ and a `tooth'
 of $\widetilde{L}_s$ is in the interval $[t-1, t+1]$.
 The righthand side of Figure~\ref{F:lift} gives a rough description of the half-plane to the left of $\widetilde{L}_{ts}'$. That is, 
 $\widetilde{L}_{ts}'$ is the boundary of the shaded region. Figure~\ref{F:def} gives a
 more detailed description of the intersection pattern of two arcs of $\widetilde{L}_s$ and 
 $\widetilde{L}_{st}'$ (fat lines). In order to simplify the picture some of the remaining parts of 
 curves are hidden, the rest are sketched either by thin or by dotted lines.
  
%\begin{figure}[!htbp]
\begin{figure}[!tbp]
\begin{center}
\includegraphics[width=\textwidth, trim=0 150 100 250]{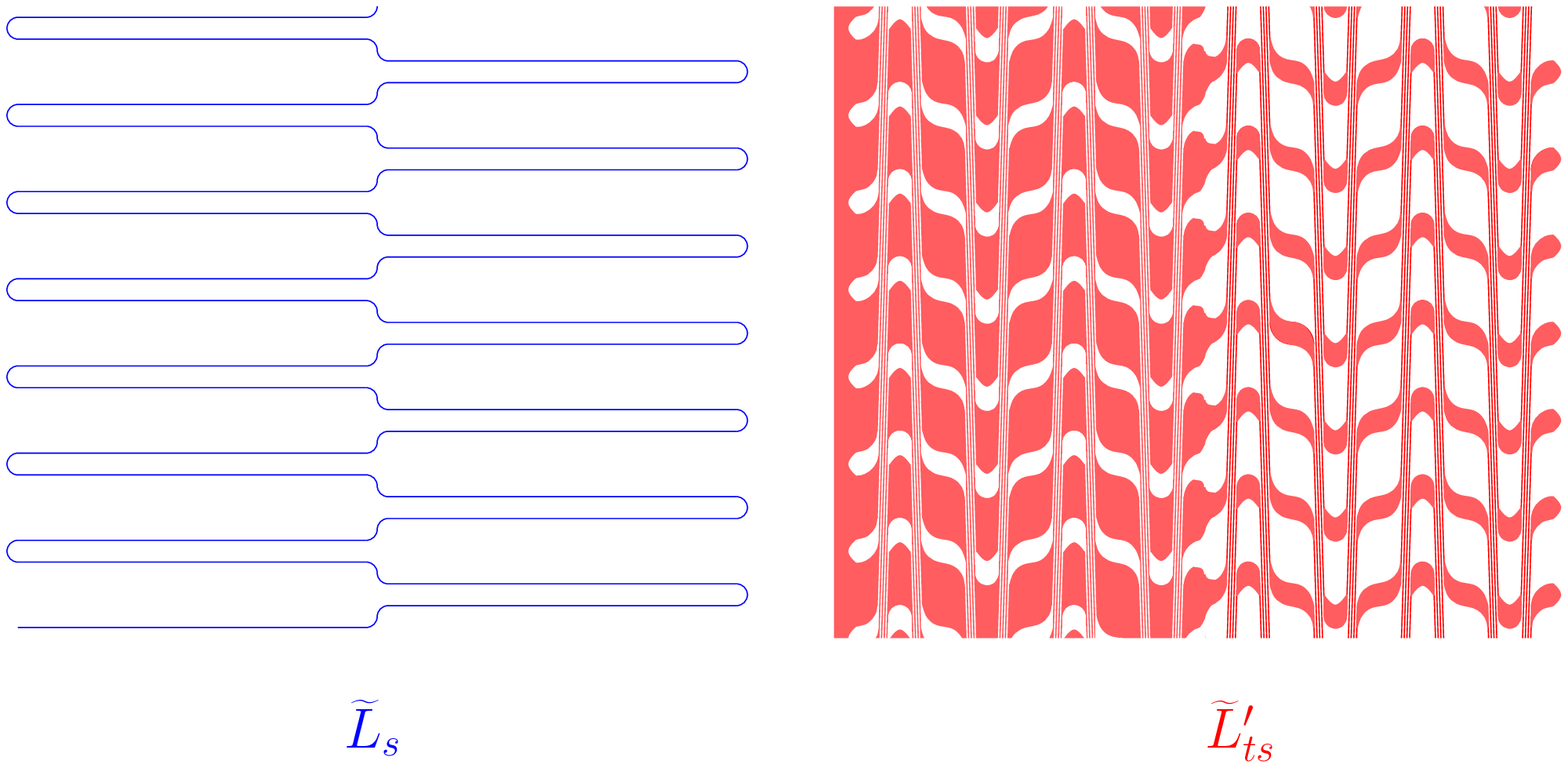}
\caption{}
\label{F:lift}
\end{center}
\end{figure}

\begin{figure}[!htbp]
\begin{center}
\includegraphics[width=0.7\textwidth]{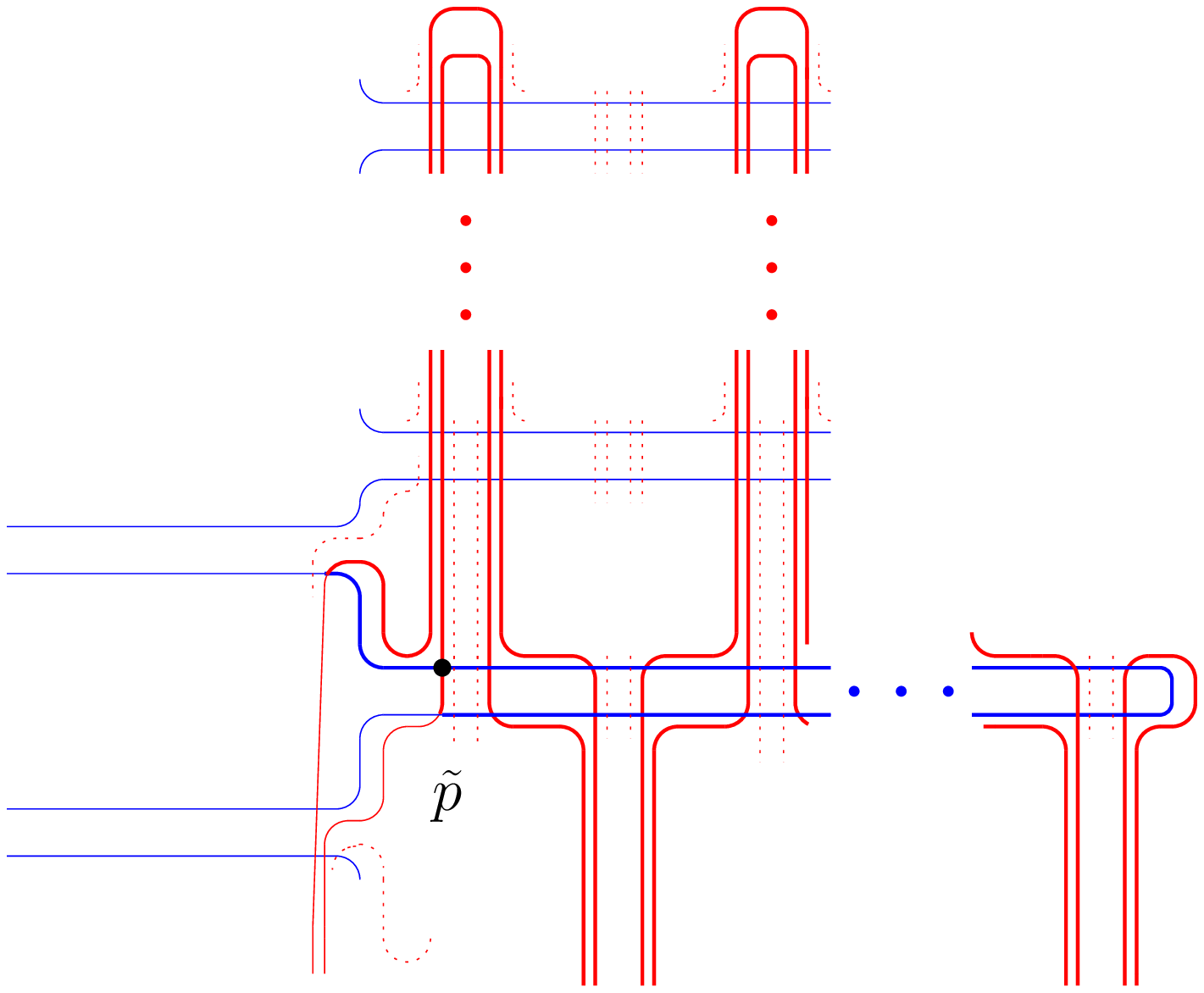}
\caption{}
\label{F:def}
\end{center}
\end{figure}

We may assume that $g_s L \pitchfork f_t g_s (L')$ as this property holds for generic $t$. Pick an intersection point
$\tilde{p} \in \widetilde{L}_s \cap \widetilde{L}_{ts}'$ in the neighborhood
of $\{0\}\times \RR$ as described in Figure~\ref{F:def}. Put $p := \pi (\tilde{p}) \in L_s \cap L_{ts}'$. 

\medskip

\underline{Step III:}
We show that $\sigma_p \geq \min(t, s)-1$. Following the discussion in Section~\ref{S:def}, we may compute $\sigma_p$
by considering the minimal energy of a smooth lune with endpoint at $p$ instead of the energy of holomorphic strips.
Note that every such lune lifts to a smooth lune in $\RR^2$ with endpoint at $\tilde{p}$ and appropriate boundary conditions in $\widetilde{L}_s$ and
$\widetilde{L}_{ts}'$. The lift preserves the energy (area) of a lune, so we proceed with computations on the universal cover instead of $T^2$. 
In Remark 6.11, ~\cite{Si-Ro-Sa:CombiFloer}
the authors propose the following algorithm to locate the lunes. 
Given an intersection point $\tilde{q} \in \widetilde{L}_s \cap \widetilde{L}_{ts}'$, consider the
two arcs $\gamma \subset \widetilde{L}_s$, $\gamma' \subset \widetilde{L}_{ts}'$ connecting $\tilde{p}$ with $\tilde{q}$. $\gamma \cup \gamma'$ bounds 
a smooth lune $u : D_+ \to \RR^2$, $u(\RR \cap D_+) \subseteq \gamma$, $u(S^1 \cap D_+) \subseteq \gamma'$ with $u(-1) = \tilde{p}, u(1) = \tilde{q}$ if and only if the following conditions hold:
\begin{enumerate}
  \item
	Orient $\gamma, \gamma'$ from $\tilde{p}$ to $\tilde{q}$. $\tilde{p}, \tilde{q}$ must have opposite intersection indices. 
  \item
	$\gamma$ must be homotopic to $\gamma'$ relative endpoints.
  \item
	The winding number $w:\RR^2 \setminus (\gamma \cup \gamma') \to \ZZ$ of the closed curve $\gamma * -\gamma'$ must be non-negative
	and satisfy $w(z) \in \{0, 1\}$ near $\tilde{p}$ and $\tilde{q}$.
\end{enumerate}
Moreover, if $\gamma, \gamma'$ satisfy the conditions above, such a lune is unique up to reparametrization.
We may find lunes with $u(-1) = \tilde{q}, u(1) = \tilde{p}$ in a similar way by interchanging the roles of $\tilde{p}$ and $\tilde{q}$.

Figure~\ref{F:lunes} describes six lunes with endpoint at $\tilde{p}$ (the proportions are not precise). The lunes (c-f) cover roughly 
either a straight `tooth' or a vertically deformed one and have energy 
$E(u) \geq s-1$ while the lunes (a) and (b) satisfy $E(u) \geq t-1$. Therefore $E(u) \geq \min(t, s)-1$ holds for all six.

\begin{figure}[!htbp]
\begin{center}
\includegraphics[width=0.6\textwidth]{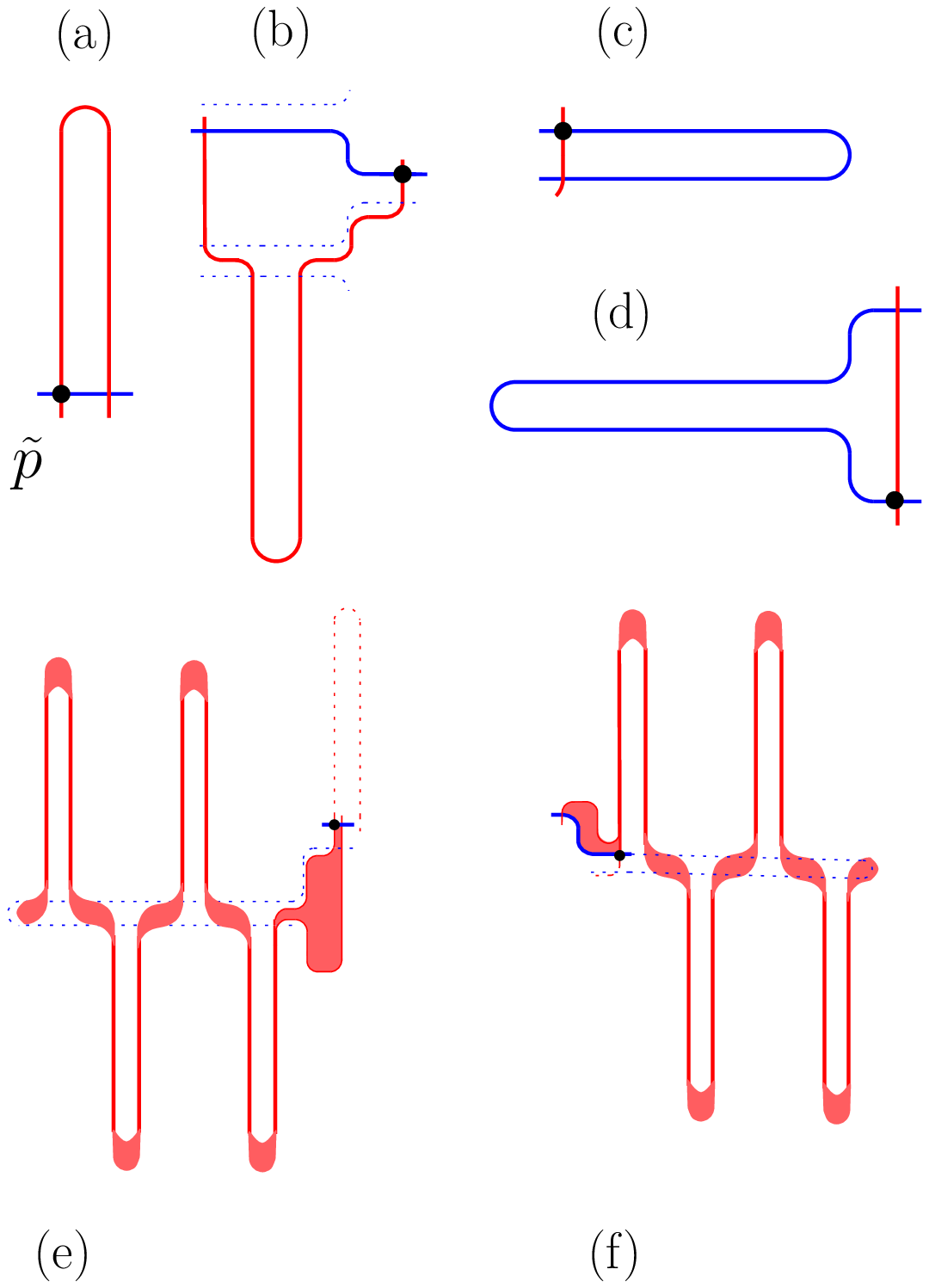}
\caption{}
\label{F:lunes}
\end{center}
\end{figure}

\medskip

\underline{Step IV:}
We show that there are no other lunes. This implies $\sigma_p \geq \min(t, s)-1$ and finishes the proof of the proposition.
Note that in our setup the second condition of the algorithm is always true and the first one holds for many candidate points $\tilde{q}$. 
The only significant constraint is imposed by the third condition.

Let $\tilde{q} \in \widetilde{L}_s \cap \widetilde{L}_{ts}'$ be a candidate for the second endpoint of a lune, 
$\gamma \subset \widetilde{L}_s$, $\gamma' \subset \widetilde{L}_{ts}'$ be the arcs connecting $\tilde{p}$ with $\tilde{q}$.

Observation I: let $j$ be one of (a-f), denote by $u_j$ the corresponding lune. Put $\tilde{q}_j$, $\gamma_j \subset \widetilde{L}_s$, 
$\gamma'_j \subset \widetilde{L}_{ts}'$ to be the second endpoint and boundary arcs of $u_j$.
If $\gamma \supsetneq \gamma_j$ and $\gamma' \supsetneq \gamma'_j$ then $\tilde{q}_j \in \gamma \cap \gamma'$ and the winding number
$w: \RR^2 \setminus (\gamma \cup \gamma') \to \ZZ$ of $\gamma * -\gamma'$ near $\tilde{q}_j$ will attain both positive and negative values. This
is a contradiction to the third condition for existence of a lune in the list above.

Observation II: note that $\gamma$ traverses $\widetilde{L}_s$ which goes from $\tilde{p}$ either to the left or to the right to the 
approximate distance $4s$ and then turns back. Similarly, $\gamma' \subset \widetilde{L}_{ts}'$ and $\widetilde{L}_{ts}'$ goes either up or down to the distance $4t$ and back. 
We call $\gamma$ `short' if $\gamma \subsetneq \gamma_c$ or $\gamma \subsetneq \gamma_d$, otherwise it is `long'.
Similarly, $\gamma'$ is `short' if $\gamma' \subsetneq \gamma'_a$ or $\gamma' \subsetneq \gamma'_b$ and `long' otherwise.
If the curve $\gamma$ is short then $\gamma'$ must be long to arrive to the same endpoint $\tilde{q}$.
And vice versa: a short $\gamma'$ implies that $\gamma$ is long. Therefore at least one of
$\gamma, \gamma'$ is long. That is, either $\gamma$ traverses a full `tooth' or $\gamma'$ goes along a full oscillation.

We assume by contradiction that $\tilde{q}$ is an endpoint which is different from the six endpoints in (a-f). There are eight possible cases:
\begin{itemize}
  \item 
	$\gamma$ is long and goes to the right from $\tilde{p}$ while $\gamma'$ goes down from $\tilde{p}$. 
	Then $\gamma \supsetneq \gamma_c$ hence by observation I, $\gamma' \subset \gamma'_c$.
	But $\gamma'_c$ contains no intersection points with $\widetilde{L}_s$ other than $\tilde{p}$ and $\tilde{q}_c$, a contradiction.
  \item 
	$\gamma$ is long and goes to the right, $\gamma'$ goes up. 
	Note that all points on $\gamma$ have their $y$ coordinate below that of $\tilde{p}$.
	Hence $\gamma'$ must traverse at least one full oscillation to arrive to a point $\tilde{q}$ which is below $\tilde{p}$. 
	Moreover, $\tilde{q}$ is different from $\tilde{q}_a$. But then $\gamma' \supsetneq \gamma'_a$, $\gamma \supsetneq \gamma_a$, a contradiction to observation I.
  \item 
	$\gamma$ is long and goes to the left, $\gamma'$ goes up. 
	Then $\gamma \supsetneq \gamma_d$ hence by observation I, $\gamma' \subset \gamma'_d$.
	But $\gamma'_d$ contains no intersection points with $\widetilde{L}_s$ other than $\tilde{p}$ and $\tilde{q}_d$, a contradiction. 	
  \item 
	$\gamma$ is long and goes to the left, $\gamma'$ goes down. 
	Then all points on $\gamma$ have their $y$ coordinate above that of $\tilde{p}$.
	Hence $\gamma'$ must traverse at least one full oscillation to arrive to $\tilde{q}$ which is above $\tilde{p}$. Moreover,
	$\tilde{q}$ is different from $\tilde{q}_b$.
	Then $\gamma \supsetneq \gamma_b$, $\gamma' \supsetneq \gamma'_b$, a contradiction.
  \item 
	$\gamma'$ is long and goes up from $\tilde{p}$, $\gamma$ goes right.
	Then $\gamma' \supsetneq \gamma_a$ hence by observation I, $\gamma \subset \gamma_a$. 
	Therefore $\tilde{q} \in \gamma_a$. However, on the way up from $\tilde{p}$, $\widetilde{L}_{ts}'$ intersects $\gamma_a$ only at
	the point $\tilde{q}_a$.
  \item 
	$\gamma'$ is long and goes up, $\gamma$ goes left.
	Then $\gamma \supsetneq \gamma_f$ as $\gamma_f$ has no interior intersection points with $\widetilde{L}_{ts}'$.
	We may assume that $\gamma$ is short (the case of a long $\gamma$ was already considered above), namely, $\tilde{q} \in \gamma_d$.
	$\gamma'_f$ intersects $\gamma_d$ at three points other than $\tilde{p}$, two of them are $\tilde{q}_d, \tilde{q}_f$ and the third has the same intersection index as
	$\tilde{p}$, hence cannot be an endpoint of a lune (contradiction to condition (1) of the algorithm).
	Therefore $\gamma'$ must traverse $\gamma'_f$, in contradiction to observation I.
  \item 
	$\gamma'$ is long and goes down, $\gamma$ goes right.
	Then $\gamma \supsetneq \gamma_e$ as $\gamma_e$ has no interior intersection points with $\widetilde{L}_{ts}'$. 
	We may assume that $\gamma$ is short, that is, $\tilde{q} \in \gamma_c$. $\gamma'_e$ intersects $\gamma_c$ at four points:
	$\tilde{p}, \tilde{q}_c, \tilde{q}_e$ and the forth has inappropriate intersection index. Therefore $\gamma'$ must traverse 
	$\gamma'_e$, in contradiction to observation I.
  \item 
	$\gamma'$ is long and goes down, $\gamma$ goes left.
	Then $\gamma' \supsetneq \gamma_b'$. By observation I, $\gamma \subset \gamma_b$. However, on the way down from $\tilde{p}$,
	$\widetilde{L}_{ts}'$ intersects $\gamma_b$ only at the point $\tilde{q}_b$.
\end{itemize}
\end{proof}

\begin{rem}
  $f_1, g_1$ constructed above generate a free group $F_2$ in $\Ham(T^2)$:
  $\supp(F) \cup \supp (G)$ is homotopic to the number eight figure. Pick a common periodic point $x \in T^2$ of period one for both 
  $f_t, g_t$. Then the trajectory of $x$ under the action of $<f_1, g_1>$ gives an isomorphism 
  $<f_1, g_1> \simeq \pi_1 (\supp(F) \cup \supp (G), x)$.
  
  Denote by $S$ the generating set consisting of 
  $f_1, g_1, f_{-1}, g_{-1}$ and their conjugates in $F_2$. D. Calegari observed 
  that the word metric with respect to $S$ satisfies all estimates known to the author of the restriction of Hofer's metric to $F_2$.
  It is easy to show that $\|\cdot\|_S \geq \| \cdot \|_H$. It would be interesting to know if these two metrics are comparable.
\end{rem}

\medskip

We adapt Proposition~\ref{p:torus} to handle closed surfaces $(M, \omega)$ of genus $g > 1$. Without loss of generality
we assume that $Area (M) > 1$. Let $F, G : T^2 \to \RR$ be the Hamiltonian functions from Proposition~\ref{p:torus}.
We present $M$ as a connected sum $T^2 \# \Sigma_{g-1}$ of the torus with a surface of genus $g-1$ where 
$\Sigma_{g-1}$ is glued to $T^2$ along a small circle which does not intersect the supports of $F, G$ and the curves $L$, $L'$. This allows us to push
$F, G, f_t, g_s$ forward to $M$. We continue to denote objects on $M$ with the same notation. We claim that the same bounds on the Hofer norm hold in $M$.
\begin{prop}\label{p:surf}
\[
	\min(t, s)-1 \leq \|[f_t, g_s]\| \leq 2 \min (t, s).
\]
\end{prop}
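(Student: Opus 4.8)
The plan is to mimic the proof of Proposition~\ref{p:torus} almost verbatim, with two modifications accounting for the fact that we now work on $M = T^2 \# \Sigma_{g-1}$ rather than on $T^2$ itself. The upper bound requires no change at all: the estimate $\|f_{\pm t}\| \le t$, the conjugation invariance, and the triangle inequality argument only use the Hamiltonian $F$ (resp. $G$) and its oscillation, which are unchanged when we push $F, G$ forward to $M$. So $\|[f_t, g_s]\| \le 2\min(t,s)$ is immediate.

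For the lower bound, I would again introduce the meridians $L = \{1-2\varepsilon\}\times S^1$ and $L' = \{1-\varepsilon\}\times S^1$, which live in the torus summand of $M$ (chosen so that the connect-sum circle avoids them, as stipulated). As before, $L \cap L' = \emptyset$ gives $\|[f_t,g_s]\| \ge E_{sep}(L, [f_t,g_s]L')$, and bi-invariance of Hofer's norm together with $f_t$-invariance of $L$ reduces this to $E_{sep}(g_s L, f_t g_s L')$, exactly as in equation~(\ref{Eq:esep}). The plan is then to apply Theorem~\ref{T:bound} with $M$ geometrically bounded (being closed) and with a suitable tame almost complex structure $J$: take $J$ to agree with the standard $i$ on the torus part and extend arbitrarily (tamely) over $\Sigma_{g-1}$. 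Since $M$ has genus $g > 1$, one has $\pi_2(M) = 0$, so $\sigma_M = \infty$; and since $L_s := g_s L$, $L'_{ts} := f_t g_s L'$ are simple closed curves on a surface of positive genus, $\pi_2(M, L_s) = \pi_2(M, L'_{ts}) = 0$, so $\sigma_{L_s} = \sigma_{L'_{ts}} = \infty$ as well. Hence it suffices to exhibit an intersection point $p \in L_s \cap L'_{ts}$ with $\sigma_p \ge \min(t,s) - 1$.

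The computation of $\sigma_p$ is where the only genuine new point arises. On the torus we passed to the universal cover $\RR^2$ and used the combinatorial lune algorithm of Section~\ref{S:def}; we want to do the same here, but the universal cover of $M$ is no longer $\RR^2$. The key observation is that holomorphic strips (equivalently, smooth lunes) with small energy stay in a neighborhood of the supports and of the curves $L_s, L'_{ts}$, all of which are contained in the torus summand minus the connect-sum region. More precisely, a lune $u : D_+ \to M$ with boundary on $L_s \cup L'_{ts}$ and endpoint $p$, if its energy is less than, say, $\min(t,s)-1$, cannot reach the $\Sigma_{g-1}$ part: its image is an immersed region whose boundary lies on the two curves, and the region swept out near the torus summand of the Heegaard-type picture is controlled exactly as in Step~II–IV of the torus proof. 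I would argue that such a lune lifts to a lune in the same planar picture (Figures~\ref{F:lift}–\ref{F:lunes}) — concretely, we can choose an embedded disk (or the relevant subsurface) in $M$ containing the supports of $F, G$ and $L, L'$ but disjoint from the connect-sum circle, which is symplectomorphic to the corresponding region of $T^2$; any low-energy lune is confined there and therefore is one of the lunes (a)–(f) analyzed before, all of which have energy $\ge \min(t,s)-1$.

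The main obstacle I anticipate is making rigorous the confinement claim: that a lune of energy $< \min(t,s)-1$ cannot stray into the $\Sigma_{g-1}$ handle. One clean way is the maximum-principle / open-mapping argument already used in Section~\ref{S:def} for positive-genus surfaces, applied to the subsurface $M \setminus (\text{handle})$: lift to the cover of the torus part, note the boundary of the image of the lifted strip lies on lifts of $L_s \cup L'_{ts}$, and conclude the strip stays in the planar region. The energy bookkeeping for the six model lunes is then identical to Step~III, and the exclusion of all other lunes is the verbatim case analysis of Step~IV. I would write the proof as ``The proof is identical to that of Proposition~\ref{p:torus}, with the following changes,'' and then spell out the two points above. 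This completes the proof of Proposition~\ref{p:surf}, and together with Lemma~\ref{lem:prod} and the admissibility discussion, Theorem~\ref{T:main} follows.
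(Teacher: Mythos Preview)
Your overall strategy matches the paper's: same upper-bound computation, same Lagrangians $L,L'$, same reduction via Theorem~\ref{T:bound} to bounding $\sigma_p$, and the same idea of lifting to a cover in which the torus picture is reproduced. The paper indeed writes the proof as ``follows the same lines as Proposition~\ref{p:torus}'' and works in the cover $\widehat{M}$ obtained by attaching to $\RR^2$ a $\ZZ^2$-periodic lattice of copies of $\Sigma_{g-1}$.

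There is, however, a genuine gap in your confinement argument. You assert that a lune of energy $<\min(t,s)-1$ ``cannot reach the $\Sigma_{g-1}$ part,'' but no energy barrier supports this: the connected component of $M\setminus(L_s\cup L'_{ts})$ containing the attaching circle has some fixed area independent of $t,s$, so for large $t,s$ a hypothetical lune covering it once could still have small energy relative to $\min(t,s)-1$. The paper's argument is purely topological and applies to \emph{all} lunes, not just low-energy ones: lift a lune $u_M$ to $\hat u_M$ in $\widehat{M}$; the degree of $\hat u_M$ is locally constant off $\widehat L_s\cup\widehat L'_{ts}$, and one shows it vanishes on every component containing a copy of $\Sigma_{g-1}$ by picking a noncontractible loop $\alpha$ in that copy and observing that in the universal cover the lift of $\alpha$ is unbounded while the lift of the lune (an immersed half-disk) is bounded. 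Hence the image of $\hat u_M$ misses those components, and $\hat u_M$ is the pushforward of a lune in $\RR^2$---necessarily one of (a)--(f) (more precisely, those among them that avoid the attaching circle). Your allusion to ``lift to the cover of the torus part'' and a maximum-principle argument is in the right direction, but the argument from Section~\ref{S:def} that you cite handles a different situation (curves meeting at a single point) and does not by itself yield the needed conclusion here; the missing ingredient is the degree/noncontractible-loop step above.
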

\begin{proof}
  The proof follows the same lines as Proposition~\ref{p:torus}. We indicate only the necessary changes.
  
  Computation of the upper bound is the same.
  
  \underline{Step I:} we push $L$, $L'$, $p$ from $T^2$ forward to $M$ and equip $M$ with a complex structure which restricts to $i$ on $T^2$.
  As before, we would like to show $E_{sep} (g_s (L), f_t g_s (L')) \geq \min(t, s) - 1$ using Theorem~\ref{T:bound}. 
  We note that $\sigma_{M} = \sigma_{g_s (L)} = \sigma_{f_t g_s (L')} = \infty$.
  
  \underline{Step II:} We work with a cover $\pi: \widehat{M} \to M$ which is obtained by gluing to $\RR^2$ infinitely many copies
  of $\Sigma_{g-1}$ along $\ZZ^2$-periodic lattice. The lifts $\widetilde{L}_s, \widetilde{L}_{ts}', \tilde{p}$ are pushed
  from $\RR^2$ forward to $\widehat{L}_s, \widehat{L}_{ts}', \hat{p}$ in $\widehat{M}$. We get the same picture as in Figure~\ref{F:def} up to copies of $\Sigma_{g-1}$ attached in appropriate places.
  
  \underline{Steps III-IV:}
  We show that $\sigma_p \geq \min(t, s)-1$. We consider smooth lunes with endpoint at $p$ and claim that they arise as pushforward
  of [some of] the lunes (a-f) described in Figure~\ref{F:lunes}. We observe that pushforward is possible only for those lunes that do not cover the attaching circle
  of $\Sigma_{g-1}$. When it is defined, the pushforward preserves the energy, therefore the desired bound for $\sigma_p$ in $M$ follows from that in $T^2$.

  Let $u_M : D_+ \to M$ be a lune with endpoint at $p$, $\hat{u}_M$ be its lift to a lune in $\widehat{M}$ with endpoint at $\hat{p}$. 
  As $\hat{u}_M (\partial D_+) \subset \widehat{L}_s \cup \widehat{L}_{ts}'$, the degree of $\hat{u}_M$ is a locally constant function 
  in $\widehat{M} \setminus (\widehat{L}_s \cup \widehat{L}_{ts}')$. 
  We claim that the degree vanishes in all connected components which contain a copy of $\Sigma_{g-1}$. 
  Indeed, let $\alpha$ be a noncontractible loop in $\Sigma_{g-1}$. If $\hat{u}_M$ has nonzero degree at points of $\alpha$, we lift the picture to the universal cover
  and get a lift of $\alpha$ contained inside the lift of $\hat{u}_M$. However, the lift of $\hat{u}_M$ is a lune hence it is bounded while the lift of $\alpha$ is not bounded, a contradiction.
  
  Therefore the degree of $\hat{u}_M$ is zero in connected components containing copies of $\Sigma_{g-1}$. 
  $\hat{u}_M$ is an orientation preserving immersion by definition of a smooth lune, hence these connected components are outside of the image of
  $\hat{u}_M$. This implies that $\hat{u}_M$ can be obtained by pushing forward a lune in $\RR^2$.
\end{proof}

Let $M = \Sigma \times N$ where $\Sigma$ is a closed surface of positive genus and $N$ is an admissible manifold as defined in Section~\ref{S:def}.
We equip $M$ with a product symplectic form $\omega$. Denote by $\pi_\Sigma : M \to \Sigma$, $\pi_N : M \to N$ the natural projections.
The Hamiltonians $f_t, g_s$ defined in Proposition~\ref{p:surf} lift to $\hat{f}_t = \pi_\Sigma^* f_t$, $\hat{g}_s = \pi_\Sigma^* g_s$. They
satisfy the same inequality in Hofer norm:

\begin{prop}\label{p:prod}
\[
	\min(t, s)-1 \leq \|[\hat{f}_t, \hat{g}_s]\| \leq 2 \min (t, s).
\]
\end{prop}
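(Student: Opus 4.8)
The plan is to imitate the proof of Proposition~\ref{p:surf} essentially verbatim, using as Lagrangians in $M=\Sigma\times N$ the products of the meridians of $\Sigma$ with the admissible Lagrangians of $N$, and reducing every holomorphic curve count to the two factors. The \emph{upper bound} is immediate: since $\hat{f}_t=f_t\times\mathrm{id}_N$ and $\hat{g}_s=g_s\times\mathrm{id}_N$, the generating Hamiltonians $\pi_\Sigma^*F,\pi_\Sigma^*G$ have the same oscillation as $F,G$, so $\|\hat{f}_{\pm t}\|\le t$ and $\|\hat{g}_{\pm s}\|\le s$; conjugation invariance gives $\|\hat{g}_{-s}\hat{f}_t\hat{g}_s\|=\|\hat{f}_t\|\le t$, and the triangle inequality yields $\|[\hat{f}_t,\hat{g}_s]\|\le 2t$, and symmetrically $\le 2s$.

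For the \emph{lower bound}, let $L,L'\subset\Sigma$ and $p_\Sigma\in g_sL\cap f_tg_sL'$ be the meridians and the intersection point from the proof of Proposition~\ref{p:surf} (or of Proposition~\ref{p:torus} when $\Sigma$ is a torus), and let $K_1,K_2\subset N$, $p_N\in K_1\cap K_2$ and the regular tame almost complex structure $J_N$ witness admissibility of $N$. Put $\mathcal{L}=L\times K_1$ and $\mathcal{L}'=L'\times K_2$; these are Lagrangians of $M$ with $\mathcal{L}\cap\mathcal{L}'=\emptyset$, since $L\cap L'=\emptyset$. As in Proposition~\ref{p:surf}, disjointness gives $\|[\hat{f}_t,\hat{g}_s]\|\ge E_{sep}(\mathcal{L},[\hat{f}_t,\hat{g}_s]\mathcal{L}')$, and, using that $\hat{f}_t$ preserves $\mathcal{L}$ (because $f_t$ preserves $L$), bi-invariance of the Hofer norm gives
\[
E_{sep}(\mathcal{L},[\hat{f}_t,\hat{g}_s]\mathcal{L}')=E_{sep}(\hat{g}_s\mathcal{L},\hat{f}_t\hat{g}_s\mathcal{L}')=E_{sep}\bigl((g_sL)\times K_1,\;(f_tg_sL')\times K_2\bigr).
\]
By continuity of the Hofer norm it suffices to bound the last expression below by $\min(t,s)-1$ for a dense set of parameters, so we may assume $g_sL\pitchfork f_tg_sL'$ in $\Sigma$; then $(g_sL)\times K_1$ and $(f_tg_sL')\times K_2$ are transverse Lagrangians of $M$.

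I would estimate $E_{sep}\bigl((g_sL)\times K_1,(f_tg_sL')\times K_2\bigr)$ with Theorem~\ref{T:bound}, applied to the closed (hence geometrically bounded) manifold $M$ equipped with the product almost complex structure $J=j_\Sigma\oplus J_N$, where $j_\Sigma$ is the complex structure used on $\Sigma$ in the proof of Proposition~\ref{p:surf}; by the computation in Lemma~\ref{lem:prod} this $J$ is regular and tame. A $J$-holomorphic sphere, or a $J$-holomorphic disc with boundary on $(g_sL)\times K_1$ or $(f_tg_sL')\times K_2$, splits into a pair of such objects in $\Sigma$ and in $N$; its $\Sigma$-component is constant since $\Sigma$ has positive genus (no nonconstant holomorphic spheres, and all holomorphic discs with boundary on $g_sL$ or $f_tg_sL'$ are constant, as recorded in the proof of Proposition~\ref{p:surf}), and its $N$-component is constant by admissibility. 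Hence $\sigma_M=\sigma_{(g_sL)\times K_1}=\sigma_{(f_tg_sL')\times K_2}=\infty$, and it remains to produce an intersection point $p$ with $\sigma_p\ge\min(t,s)-1$. Take $p=(p_\Sigma,p_N)$. Since $J$ is regular, by the last sentence of Theorem~\ref{T:bound} only strips isolated in $\mathcal{M}^*(p,J)$ contribute to $\sigma_p$; arguing as in Lemma~\ref{lem:prod}, such an isolated strip $u=(u_\Sigma,u_N)$ projects to strips isolated in $\mathcal{M}^*(p_\Sigma,j_\Sigma;\Sigma)\cup\{\mathrm{const}\}$ and in $\mathcal{M}^*(p_N,J_N;N)\cup\{\mathrm{const}\}$, so admissibility of $N$ forces $u_N$ constant; then $u_\Sigma$ is an isolated strip in $\Sigma$ converging to $p_\Sigma$, and $E(u)=E(u_\Sigma)\ge\sigma_{p_\Sigma}\ge\min(t,s)-1$ by the lune count in the proofs of Propositions~\ref{p:torus} and~\ref{p:surf}. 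Hence $\sigma_p\ge\min(t,s)-1$ and Theorem~\ref{T:bound} gives the claimed lower bound.

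The step I expect to be the main obstacle is the moduli-space bookkeeping just sketched: making rigorous that an isolated strip of the product necessarily has constant $N$-component — so that admissibility genuinely prevents new low-energy strips from appearing — and that its $\Sigma$-component remains isolated, so that the lune computation of the surface case applies verbatim. This is only a mild extension of Lemma~\ref{lem:prod}, the one new feature being the asymmetry that $\Sigma$ does carry isolated strips while $N$ does not.
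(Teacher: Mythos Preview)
Your proposal is correct and follows essentially the same route as the paper: product Lagrangians $L\times K_1$, $L'\times K_2$, product almost complex structure regular via Lemma~\ref{lem:prod}, vanishing of $\sigma_M,\sigma_{\widehat L},\sigma_{\widehat L'}$ by splitting spheres and discs into factors, and the bound on $\sigma_{(p_\Sigma,p_N)}$ by projecting isolated strips to the factors so that the $N$-component is forced constant and the $\Sigma$-component falls under the lune estimate of Propositions~\ref{p:torus}--\ref{p:surf}. Your closing caveat about the moduli-space bookkeeping is exactly the point the paper addresses by invoking the argument of Lemma~\ref{lem:prod}, so there is no gap.
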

\begin{proof}
  Computation of the upper bound is the same. 
  
  To prove the lower bound we pick Lagrangians $\widehat{L} := L \times K_1$, $\widehat{L}' := L' \times K_2$
  where $L, L'$ are the Lagrangians in $\Sigma$ defined in Proposition~\ref{p:surf} and $K_1, K_2 \subset N$ are given by the definition of 
  an admissible manifold. We pick a product almost complex structure $\widehat{J} := (i, J_N)$ where $i$ is a 
  complex structure in $\Sigma$ and $J_N$ is as in the definition of an admissible manifold. $\widehat{J}$ is regular by the same argument as
  used in Lemma~\ref{lem:prod}.

  Note that $\widehat{L}_s := \hat{g}_s \widehat{L} = L_s \times K_1$, ($L_s \subset \Sigma$ is the same as in the proof of Proposition~\ref{p:surf}),
  $\widehat{L}_{ts}' := \hat{f}_t \hat{g}_s (\widehat{L}') = L_{ts}' \times K_2$. 
  Let $\hat{p} := (p_\Sigma, p_N)$ where $p_\Sigma \in L_s \cap L_{ts}'$ is as in Proposition~\ref{p:surf} and 
  $p_N \in K_1 \cap K_2$ is taken from the definition of $N$. As before, $\|[\hat{f}_t, \hat{g}_s]\| \geq E_{sep} (\widehat{L}_s, \widehat{L}_{ts}')$ 
  and we apply Theorem~\ref{T:bound} to prove that $E_{sep} (\widehat{L}_s, \widehat{L}_{ts}') \geq \min(t, s) - 1$. 
  
  $\pi_2 (\Sigma) = \{0\}$ together with the second property of admissible manifolds imply that $\widehat{J}$-holomorphic spheres in $M$
  are constant, hence $\sigma_M = \infty$. A similar argument for disks implies $\sigma_{\widehat{L}} = \sigma_{\widehat{L}_{ts}'} = \infty$.

  We note that all $\widehat{J}$-holomorphic strips in $M$ are projected by $\pi_\Sigma$ and $\pi_N$ to pseudo-holomorphic strips in $\Sigma$ and in $N$. 
  Vice versa, given two pseudo-holomorphic strips $u_1$ in $\Sigma$ and $u_2$ in $N$, they lift to $\hat{u} = (u_1, u_2)$ in $M = \Sigma \times N$.
  Isolated $\widehat{J}$-holomorphic strips $\hat{u}$ in $M$ with endpoint at $\hat{p}$ are presented by a pair $(u_1, u_2)$ where $u_1$ is an isolated holomorphic strip
  in $\Sigma$ with endpoint at $p_\Sigma$ and $u_2$ is an isolated $J_N$ holomorphic strip with endpoint at $p_N$. We assume that $\hat{u}$ is not constant. 
  $u_2$ is constant by definition of $N$ while $E(u_1) \geq \min(t, s) - 1$ by computation in Proposition~\ref{p:surf}. We note that 
  $E(u) = E(u_1) + E (u_2) \geq \min(t, s) - 1$ hence $\sigma_{p} \geq \min(t, s) - 1$ and the proposition follows.
\end{proof}

% \appendix 
% \section{}
% Proof of Theorem~\ref{T:bound}: taken verbatim, the proof of Theorem 4.9 in ~\cite{Usher:subm-Hnorm} shows Theorem~\ref{T:bound} 
% except for the last sentence regarding regular almost complex structures and isolated holomorphic strips. Here we present an
% adaptation of the argument from ~\cite{Chekanov:Lag-energy-1}. It follows the same lines as the proof in ~\cite{Usher:subm-Hnorm}.
% 
% Let $H : M \times [0, 1] \to \RR$ be a compactly supported Hamiltonian function with the corresponding vector field $X_t$. We assume that $J$ 
% satisfies the geometrical boundedness condition described in Section~\ref{S:def}. Denote
% \[
%   \delta = \min (\sigma_M, \sigma_L, \sigma_{L'}, \sigma_p).
% \]
% 
% Given a smooth function $\rho : \RR \to [0, 1]$ we consider the space of trajectories 
% 
% For any $R > 1$ let $\beta_R$ be a smooth function such that $\beta_R (s) = 1$ for $|s| < R$, $\beta_R (s) = 0$ for $|s| > R+1$ and $\beta_R$ is
% monotone in $R < |s| < R=1$. 

\bibliography{bibliography}

\begin{thebibliography}{dSRS}

\bibitem[ALP]{ALP}
M.~Audin, F.~Lalonde, and L.~Polterovich.
\newblock Symplectic rigidity: {L}agrangian submanifolds.
\newblock In M.~Audin and J.~Lafontaine, editors, {\em Holomorphic curves in
  symplectic geometry}, volume 117 of {\em Progr. Math.}, pages 271--321,
  Basel, 1994. Birkh\"{a}user Verlag.

\bibitem[Che]{Chekanov:Lag-energy-1}
Y.~Chekanov.
\newblock Lagrangian intersections, symplectic energy, and areas of holomorphic
  curves.
\newblock {\em Duke Math. J.}, 95(1):213--226, 1998.

\bibitem[dSRS]{Si-Ro-Sa:CombiFloer}
V.~de~Silva, J.~Robbin, and D.~Salamon.
\newblock Combinatorial {F}loer homology.
\newblock {\em Memoirs of the AMS}, 230(1080):1--114, 2014.

\bibitem[Flo]{Fl:Morse-theory}
A.~Floer.
\newblock Morse theory for {L}agrangian intersections.
\newblock {\em J. Differential Geom.}, 28(3):513--547, 1988.

\bibitem[Hof]{Hof:TopProp}
H.~Hofer.
\newblock On the topological properties of symplectic maps.
\newblock {\em Proc. Royal Soc. Edinburgh}, 115(1-2):25--38, 1990.

\bibitem[McD]{McD:Prblm}
D.~McDuff.
\newblock Problem session.
\newblock {\em Oberwolfach Reports}, 33:2049--2051, 2006.

\bibitem[MS]{McD-Sa:Jhol-2}
D.~McDuff and D.~Salamon.
\newblock {\em {$J$}-holomorphic curves and symplectic topology}, volume~52 of
  {\em American Mathematical Society Colloquium Publications}.
\newblock American Mathematical Society, Providence, RI, 2004.

\bibitem[Ush]{Usher:subm-Hnorm}
M.~Usher.
\newblock Submanifolds and the {H}ofer norm.
\newblock Preprint, to appear in J. of the European Mathematical Society.

\end{thebibliography}

\end{document}